\newtheoremstyle{boldplain}
{9pt}
{9pt}
{\itshape}
{}
{\bfseries}
{.}
{.5em}
{\thmname{#1}\thmnumber{ #2}\thmnote{ (#3)}}%
\newtheoremstyle{bolddefinition}
{9pt}
{9pt}
{}
{}
{\bfseries}
{.}
{.5em}
{\thmname{#1}\thmnumber{ #2}\thmnote{ (#3)}}%
\theoremstyle{boldplain}
\newtheorem{proposition}[equation]{Proposition}
\newtheorem{theorem}[equation]{Theorem}
\theoremstyle{bolddefinition}
\newtheorem{remark}[equation]{Remark}
\numberwithin{equation}{section}
\newcommand{\R}{{\mathbb R}}
\newcommand{\N}{{\mathbb N}}
\newcommand{\ora}{\overrightarrow}
\newcommand{\pihalf}{\frac{\pi}{2}}
\newcommand{\tits}{\partial_{T}}
\newcommand{\dtits}{\angle_{T}}
\newcommand{\weylch}{\Delta_\text{mod}}
\DeclareMathOperator{\CAT}{CAT}
\DeclareMathOperator{\minset}{Min}
\DeclareMathOperator{\rank}{rank}
\begin{document}

\title{On the displacement function of isometries
of Euclidean buildings}
\author{Carlos Ramos-Cuevas}
\date{February 24, 2014}

\maketitle

\begin{abstract}
In this note we study the displacement function 
$d_g(x):=d(gx,x)$ of an isometry $g$ of a Euclidean building. 
We give a lower bound for $d_g(x)$ depending on the distance
from $x$ to the minimal set of $g$. This answers
a question of Rousseau \cite[4.8]{Rousseau:eximmob} 
and Rapoport-Zink \cite[2.2]{RapoportZink}.

\medskip
\noindent {\em Keywords.} Euclidean buildings; displacement function;
semisimple isometries; $\CAT(0)$ spaces
\end{abstract}

\section{Introduction}

Let $(X,d)$ be a metric space and let $g\in Isom(X)$
be an isometry of $X$. The {\em displacement
function} $d_g:X\rightarrow [0,\infty)$ of $g$ is the function
given by $d_g(x):=d(gx,x)$. 
The infimum $\delta_g:=\inf\{d_g(x)\;|\; x\in X\}$ of this function
is called the {\em displacement} or {\em translation length} of $g$. Let us denote with
$\minset_g:=\{x\in X\;|\; d_g(x)=\delta_g\}$ the subset of $X$
where this infimum is attained. We call $\minset_g$ the {\em minimal set} of $g$.
We can divide the isometries of $X$ in three classes depending on the behavior of
their displacement functions:
we say that $g$ is {\em elliptic} if it fixes a point, i.e.\ $\minset_g\neq \emptyset$
and $\delta_g=0$; {\em hyperbolic} or {\em axial}
if $\minset_g\neq \emptyset$ and $\delta_g>0$;
and {\em parabolic} if $\minset_g=\emptyset$. 
We say that $g$ is {\em semisimple} if it is elliptic or hyperbolic.

Suppose now that $(X,d)$ is a complete $\CAT(0)$ space. 
Let $C\subset X$ be a closed convex subset. 
Then the {\em nearest point projection} $p=p_C:X\rightarrow C$ is a well defined
1-Lipschitz map. 
It holds that the angle $\angle_{p(x)}(x,z)$ at $p(x)$ between $x$ and $z$
is $\geq \pihalf$ for all $z\in C$
and if $y\in X$ lies on the segment $xp(x)$ between
$x\in X$ and its projection $p(x)$, then $p(y)=p(x)$ 
(see e.g.\ \cite[Prop.\ II.2.4]{BridsonHaefliger}).
Let $g\in Isom(X)$ be a semisimple isometry,
then the convexity of the distance function implies that the minimal set
$\minset_g$ is a closed
convex subset and the displacement function is an increasing function of the
distance to the minimal set. 
More precisely, let $p:X\rightarrow \minset_g$ 
be the nearest point projection. Suppose $\rho:[0,\infty)\rightarrow X$
 is a geodesic ray with 
$\rho(0)\in \minset_g$ and $p(\rho(t))=\rho(0)$ for all $t$, that is, $\rho$
is orthogonal to $\minset_g$.
It follows from the convexity of the distance function and the 1-Lipschitz property
of the projection that $d_g\circ\rho$ is a  strictly monotonically increasing function.
We are interested in understanding the growth of this function
in the case when $X$ is a Euclidean building. 
An upper bound for $d_g\circ\rho$ given by $\delta_g+2t$ 
follows easily in general from the triangle inequality. 
It is also not difficult to give examples where this bound is attained.
We are therefore interested in giving lower bounds for the function $d_g\circ\rho$.

Euclidean buildings are a special kind of $\CAT(0)$ spaces. 
They are built up from top dimensional building blocks called {\em apartments}
isometric to a Euclidean space. These apartments are glued together
following a pattern given by a Euclidean Coxeter complex (a class of groups
of isometries of the Euclidean space generated by reflections). 
This gluing pattern and a certain angle rigidity in the space of directions at points
forces the geometry of Euclidean buildings to have 
some discreteness nature. 
It follows that in many cases a geometric property at a point or a segment
does not depend on the point or segment in 
question or even on the Euclidean building itself
but only on the type of its Coxeter complex and the 
relative position of the point or segment
with respect to this Coxeter complex.

If $X\cong\R^k$ is a Euclidean space, 
then the function $d_g\circ\rho$ above is given by
$d_g(\rho(t)) = \sqrt{\delta_g^2 + Ct^2}$ where $C>0$ is a constant depending only on
the linear part of $g$. It is reasonable to expect a similar behavior of this function
in the case of Euclidean buildings. We show the following 
(this result was first conjectured by Rousseau, see \cite[4.8]{Rousseau:eximmob} 
and \cite[2.2]{RapoportZink}).

\begin{theorem}\label{mainthm}
Let $X$ be a Euclidean building without factors isometric to Euclidean spaces.
Let $g\in Isom(X)$ be an isometry of $X$. Then
$$d_g(x) \geq \sqrt{\delta_g^2 + C\cdot d(x,\minset_g)^2}$$
for a constant $C>0$ depending only on the spherical
Coxeter complex associated to $X$ and, if $g$ is hyperbolic, on
the type of the endpoint $c(\infty)$ of an axis $c$ of $g$.
\end{theorem}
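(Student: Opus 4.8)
The plan is to reduce the estimate to a rigidity property of the spherical links of $X$. Two preliminary reductions are harmless: writing $X$ as a product of irreducible Euclidean buildings and replacing $g$ by a power preserving the de Rham factors, all of $d_g^2$, $\delta_g^2$ and $d(\,\cdot\,,\minset_g)^2$ are additive over the factors, so we may assume $X$ irreducible; and it suffices to prove the assertion in the equivalent squared form $d_g(x)^2-\delta_g^2\ge C\,d(x,\minset_g)^2$.

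Consider first the case $g$ \emph{elliptic}, so $\delta_g=0$ and $\minset_g=\mathrm{Fix}(g)$. For $x\notin\minset_g$ let $p=p_{\minset_g}(x)$, $t=d(x,p)$, and let $\xi\in\Sigma_pX$ be the initial direction of $[p,x]$; then $\angle(\xi,\eta)\ge\pihalf$ for all $\eta\in\Sigma_p\minset_g$, and $g_*\xi\ne\xi$ (otherwise $[p,gx]$ and $[p,x]$ share direction and length, forcing $gx=x$). Writing $\gamma_\zeta$ for the unit-speed geodesic from $p$ in direction $\zeta$, one has $d_g(x)=d(\gamma_\xi(t),\gamma_{g_*\xi}(t))$, and the function $s\mapsto d(\gamma_\xi(s),\gamma_{g_*\xi}(s))$ is convex, vanishes at $0$, and has right derivative $2\sin(\tfrac12\angle_p(\xi,g_*\xi))$ there, whence
$$d_g(x)\ \ge\ 2t\,\sin\!\Bigl(\tfrac12\,\angle_p(\xi,g_*\xi)\Bigr).$$
So the elliptic case follows from the \textbf{Key Lemma}: in the spherical building $\Sigma$ of the type associated to $X$, an isometry $h$ with a fixed point moves any $\xi$ with $\angle(\xi,\mathrm{Fix}(h))\ge\pihalf$ either not at all or by an angle $\angle(\xi,h\xi)\ge\varepsilon_0$, for a constant $\varepsilon_0>0$ depending only on the spherical Coxeter complex $\weylch$. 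Applying this with $h=g_*$ on $\Sigma_pX$ (so $\mathrm{Fix}(h)\supseteq\Sigma_p\minset_g$) gives the theorem in the elliptic case with $C=4\sin^2(\varepsilon_0/2)$.

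Proving the Key Lemma is the main obstacle, and it is where the building hypothesis is essential: in an arbitrary $\CAT(1)$ space an isometry fixing a wall moves a perpendicular point by an angle tending to $0$ as the point approaches the wall. The hypothesis $\angle(\xi,\mathrm{Fix}(h))\ge\pihalf$ places $\xi$ and $\mathrm{Fix}(h)$ into the perpendicular subbuilding $\mathrm{Fix}(h)^\perp=\{\eta:\angle(\eta,\mathrm{Fix}(h))\ge\pihalf\}$, on which $h$ acts fixing no point in a fixed angular neighbourhood of $\xi$. I would then pass to a power of $h$ acting type-preservingly (there are only finitely many diagram automorphisms), choose an apartment $A$ of $\mathrm{Fix}(h)^\perp$ through $\xi$, and apply the retraction of $\mathrm{Fix}(h)^\perp$ onto $A$ based at a point of $\mathrm{Fix}(h)$; this reduces the estimate to the finite Weyl group acting on the sphere $A$, where the least nonzero angle by which an element fixing a subsphere can rotate an orthogonal vector is visibly a positive, root-theoretic quantity. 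The technical content is to check that this retraction does not decrease the displacement angle and that only finitely many positions of $\xi$ relative to $\mathrm{Fix}(h)$ need be considered.

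For $g$ \emph{hyperbolic} I reduce to the elliptic case using an axis $c$ and $\zeta=c(\infty)$. The parallel set $P=P\bigl(\zeta,c(-\infty)\bigr)$ is closed, convex and $g$-invariant, it splits metrically as $P\cong Z\times\R$ with $Z$ a Euclidean building, and $g|_P$ acts as $\phi\times(\text{translation by }\delta_g)$ where $\phi$ is \emph{elliptic} on $Z$ (because $\delta_g$ is already the translation length of $g$) with $\minset_\phi\times\R=\minset_g$. Hence on $P$ one has $d_g^2=\delta_g^2+d_\phi^2$, and the elliptic estimate for $\phi$ on $Z$ proves the theorem for $x\in P$ — the constant now additionally recording the conjugacy class of $\phi$, i.e.\ the type of $\zeta$ in $\weylch$. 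For a general $x$, set $x'=p_P(x)$; since $p_P$ is $g$-equivariant and $1$-Lipschitz, $gx'=p_P(gx)$ and $d(x,x')=d(gx,gx')=d(x,P)$. Letting $\eta\in\Sigma_{x'}X$ be the direction of $[x',x]$, which is orthogonal to $\Sigma_{x'}P$, I would bound $d_g(x)^2-\delta_g^2$ below by the ``along $P$'' term (already handled) plus a ``transverse to $P$'' term, the latter controlled by applying the Key Lemma inside $\Sigma_{x'}X$ to $\eta$ and its $g$-image (which differ since $x\notin\minset_g$), and then combine the two using $d(x,\minset_g)\le d(x,P)+d(x',\minset_g)$. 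Throughout, the Busemann identity $b_\zeta\circ g=b_\zeta-\delta_g$ is what isolates the summand $\delta_g^2$ and exhibits the residual displacement as an elliptic-type contribution.
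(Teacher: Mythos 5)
Your elliptic case rests entirely on the Key Lemma, and that is precisely where the difficulty of the theorem sits; the sketch you give for it does not engage with the actual obstacle. The first-variation bound $d_g(x)\ge 2t\sin(\tfrac12\angle_p(\xi,g_*\xi))$ is fine, and it is true that a subtype-preserving isometry moves $\xi$ either not at all or by at least the minimal distance $m(\tau)$ between distinct points of subtype $\tau=\tau(\xi)$; but $m(\tau)\to 0$ as $\tau$ approaches a wall of $\weylch/\Gamma$, so the uniform $\varepsilon_0$ can only come from the hypothesis $\angle(\xi,\mathrm{Fix}(h))\ge\pihalf$, and your proposed mechanism (retract $\mathrm{Fix}(h)^\perp$ onto an apartment and quote the finite Weyl group) does not deliver it: $h$ does not preserve the chosen apartment, so the retraction does not conjugate $h$ into $W$, and retractions only decrease distances, which is the wrong direction for a lower bound on $\angle(\xi,h\xi)$. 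You also apply the lemma knowing only $\mathrm{Fix}(h)\supseteq\Sigma_p\minset_g$, whereas its hypothesis requires $\angle(\xi,\mathrm{Fix}(h))\ge\pihalf$; you need the angle-rigidity fact that $\mathrm{Fix}(g_*)$ consists exactly of directions of fixed segments. The paper's Proposition 2.1 proves the needed uniform statement by a limiting argument (convergent subsequence of subtypes, the nearest point of the limiting subtype, and the fact that a subtype-preserving isometry moving a direction by an angle tending to $0$ must eventually fix it, producing fixed points of $g$ orthogonally off the foot point --- a contradiction), and the elliptic case of the theorem is then immediate with $a=0$. Separately, your opening reduction ``replace $g$ by a power preserving the de Rham factors'' is not harmless: $\minset_{g^k}$ can be strictly larger than $\minset_g$ (already for an involution), so an estimate for $g^k$ does not imply one for $g$.

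The genuine gap is the hyperbolic case for $x$ off the parallel set $P$, which is the heart of the paper's proof and is essentially absent from yours. Projecting onto $P$ gives only the $1$-Lipschitz inequality $d(x,gx)\ge d(x',gx')$; there is no Pythagorean splitting of $d_g(x)^2-\delta_g^2$ into an ``along $P$'' plus a ``transverse'' term. Worse, the proposed transverse estimate --- applying the Key Lemma in $\Sigma_{x'}X$ to $\eta$ and ``its $g$-image'' --- is not meaningful: $g$ does not fix $x'$ (indeed $d(x',gx')\ge\delta_g>0$), so $\eta$ and $g_*\eta$ live in the links of different points and there is no isometry of $\Sigma_{x'}X$ to which the lemma could be applied. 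The paper proceeds differently: it isolates $\delta_g^2$ by projecting onto a \emph{horoball} centered at $\xi$ (so $d(x,gx)^2\ge\delta_g^2+d(x',gx)^2$ with $x'$ the point at distance $\delta_g$ along the ray $x\xi$), uses Proposition 2.1 to reduce to the regime $d(x,\minset_g)\le 3\delta_g/K$, uses Proposition 2.2 to find a fixed $m$ such that the point $x^m$ at distance $m\delta_g$ along $x\xi$ lies in $P$, and then runs an induction comparing $g^kx$ with $x^k$ so that the known estimate on $P$ is transported back to $x$ via the convexity inequality $d(x',gx)\ge d(x^{m+1},gx^m)$. Some substitute for this iteration --- a mechanism propagating the parallel-set estimate to points off $P$ --- is what your argument is missing, and I do not see how to supply it along the lines you indicate.
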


Notice that the conclusion of Theorem~\ref{mainthm} can only be satisfied by
semisimple isometries (see Proposition~\ref{prop:parreau}), that is, Euclidean
buildings do not admit parabolic isometries.

\section{Preliminaries}

In this paper we consider Euclidean buildings from the $\CAT(0)$ viewpoint as 
presented in \cite[Section 4]{KleinerLeeb:quasi-isom},
we refer to it for the basic definitions and
facts about Coxeter complexes, Euclidean and spherical buildings.
For more information on $\CAT(0)$ spaces in general we refer to \cite{BridsonHaefliger}.

All geodesic segments, lines and rays will be assumed to be parametrized
by arc-length.

Let $(X,d)$ be a Euclidean building. 
Its Tits boundary $\tits X$ with the {\em Tits distance} $\dtits$ is a $\CAT(1)$
space admitting a unique structure (possibly trivial if $X$ is a Euclidean space)
as a {\em thick} spherical building modelled on a
spherical Coxeter complex $(S,W)$. 
We say that $(S,W)$ is the spherical Coxeter complex associated to $X$.
The {\em space of directions} or {\em link} $\Sigma_x X$ at a point $x\in X$ 
with the {\em angle metric} $\angle$ is also
a $\CAT(1)$ space admitting a natural structure as a spherical building modelled on the
same spherical Coxeter complex $(S,W)$, although this structure is not in general thick.
The spherical Coxeter complex $(S,W)$ splits off a spherical factor 
in its decomposition as spherical join (see \cite[Section 3.3]{KleinerLeeb:quasi-isom})
if and only if 
the Coxeter group $W$ has fixed points on the sphere $S$. 
Equivalently, if and only if
the {\em model Weyl chamber} $\weylch := S/W$ has diameter $>\pihalf$ (in this case,
it actually has diameter $\pi$). This spherical factor of the Coxeter complex
corresponds to a spherical factor of $\tits X$, which in turn corresponds to a
Euclidean factor of $X$ in its decomposition as a product of Euclidean buildings. 
An isometry $g$ of a Euclidean building $X=X'\times \R^k$
where $X'$ is a Euclidean building without Euclidean factors decomposes naturally
as $(g_1,g_2)$ where $g_1$ is an isometry of $X'$ and $g_2$ 
is an isometry of $\R^k$.
Since we can completely describe the displacement function of Euclidean isometries 
(cf.\ the paragraph previous to Theorem~\ref{mainthm} in the Introduction),
we restrict our attention to Euclidean buildings without Euclidean factors, 
or equivalently, with model Weyl chamber $\weylch$ of diameter $\leq \pihalf$.

For a spherical building $B$ modelled on the
spherical Coxeter complex $(S,W)$ 
there is a natural 1-Lipschitz map $\theta: B \rightarrow \weylch$, which
is an isometry on chambers. The image of a point in $B$ under this map
its called its {\em type}.
Let $\Gamma$ denote the isometry group of the model Weyl chamber $\weylch$.
If $(S,W)$ has no spherical factors, then $\weylch$ is a spherical simplex
and $\Gamma$ is finite.
In this case, the quotient $\weylch/\Gamma$ is again a spherical polyhedron. 
Let us denote the image of a point in $B$ under the composition 
$\tau:B\rightarrow \weylch\rightarrow \weylch/\Gamma$ by its {\em subtype}. 
Notice that antipodal points have the same subtype.

An isometry of a Euclidean building induces an isometry of its model Weyl chamber. 
Hence, isometries preserve the subtypes of directions and points at infinity.

If $g$ is a hyperbolic isometry of a $\CAT(0)$ space $X$, then $\minset_g$
is isometric to a product $Y\times \R$, where $Y$ is again a $\CAT(0)$ space.
The isometry $g$ acts on $Y$ as the identity and on $\R$ as a translation 
of length $\delta_g$. 
The sets $\{y\}\times\R \subset \minset_g$ are geodesics lines preserved by $g$
and are called {\em axes} of $g$. The axes are parallel to each other, thus 
$\minset_g$ is a subset of the parallel set of an axis.

The displacement function of an isometry 
$g$ of a $\CAT(0)$ space $(X,d)$ is a Lipschitz convex function. This kind of
functions on $\CAT(0)$ spaces are asymptotically linear along any ray. That
is, if $\xi\in\tits X$ and $\rho$ is a ray asymptotic to $\xi$, 
then $slope_g(\xi):=\lim_{t\to\infty}\frac{1}{t}d_g(\rho(t))$ 
exists and does not depend on the particular ray $\rho$.
Let now $\rho$ be a geodesic ray with 
$\rho(0)\in \minset_g$ and orthogonal to $\minset_g$.
The observation above implies that
there is a constant $K$($=slope_g(\rho(\infty))$) depending on $g$ and $\rho(\infty)$,
such that $d_g(\rho(t))\geq \frac{K}{2}t = \frac{K}{2}d(\rho(t),\minset_g)$ 
for $t$ big enough.
Proposition~\ref{prop:parreau} makes a more precise statement in
the case of Euclidean buildings and shows that
we can choose the constant $K$ to depend only on the associated spherical Coxeter complex.

Isometries of Euclidean buildings are always semisimple. This follows form the following
result of Parreau \cite[Theorem 4.1]{Parreau}. We include here a variant of her proof for
the convenience of the reader.

\begin{proposition}\label{prop:parreau}
Let $X$ be a Euclidean building without Euclidean factors and let $g$ be an isometry.
Let $D_a=D_a(g)=\{x\in X\;|\; d_g(x)\leq a\}$ denote the sublevel sets
of the displacement function. 
There is a constant $K>0$ depending only on 
the spherical Coxeter complex $(S,W)$ associated to $X$
such that if
$D_a\neq \emptyset$ then 
$$ d_g(x)+ a \geq Kd(x,D_a).$$
In particular, all isometries of $X$ are semisimple.
\end{proposition}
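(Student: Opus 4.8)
The plan is to reduce the statement to a purely combinatorial/geometric fact about how the displacement function can decrease along a geodesic, where the decrease rate is controlled by the angle rigidity of links in Euclidean buildings. Fix a point $x$ with $d_g(x)>a$ and let $p\in D_a$ be its nearest point projection onto the closed convex set $D_a$ (convexity of $D_a$ follows from convexity of $d_g$); set $t_0:=d(x,D_a)=d(x,p)$ and let $\sigma:[0,t_0]\to X$ be the unit-speed geodesic from $p$ to $x$. The function $\varphi(t):=d_g(\sigma(t))$ is convex, $\varphi(0)\le a$, and $\varphi(t_0)=d_g(x)$; I want a lower bound on $\varphi(t_0)$ that is linear in $t_0$ with slope bounded below by a constant $K=K(S,W)$. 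Because $\varphi$ is convex it suffices to bound its right derivative at $t=0$, i.e.\ to show $\varphi'(0^+)\ge -\,(\text{something})$ is the wrong direction — rather, I will run the argument from the far end: $\varphi$ convex gives $\varphi(t_0)\ge \varphi(0)+t_0\,\varphi'(0^+)$ only if $\varphi'(0^+)\ge 0$, which need not hold, so instead I use that for a convex function $\varphi(t_0)-\varphi(s)\ge (t_0-s)\varphi'(s^+)$ for any $s$, and choose $s$ just inside $D_a$; the real content is a uniform lower bound on the rate at which $d_g$ can \emph{grow} as one exits $D_a$ orthogonally.

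The key local computation is at the link level. For a point $y$ with geodesics $y(gy)$ determining directions, the first variation formula gives, for a unit vector $v\in\Sigma_yX$,
\[
\frac{d}{dt}\Big|_{t=0^+} d_g(\gamma_v(t)) \;=\; -\cos\angle_y\!\big(v,\,\ora{y(gy)}\big)\;-\;\cos\angle_{y}\!\big(v',\,\ora{(gy)y}\big),
\]
where $\gamma_v$ is the geodesic in direction $v$ and $v'=dg^{-1}(v)$ is the corresponding direction at $gy$; equivalently this equals $-\cos\angle_y(v,\ora{y(gy)})-\cos\angle_{gy}(dg(v),\ora{(gy)y})$. At a point $y\in\partial D_a$ where $d_g(y)=a$ and $y$ is (nearly) the closest point, the outward direction $v=\ora{yx}$ makes angle $\ge\pi/2$ with the ``gradient-decreasing'' directions, so both cosines above are $\le 0$; the point is to show at least one of them is bounded away from $0$ by a constant depending only on $(S,W)$ unless $d_g$ is already constant in a neighborhood (which would force $y$ to lie deeper in a flat, contradicting the absence of Euclidean factors). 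This is exactly where the discreteness of Euclidean buildings enters: the direction $\ora{y(gy)}$ and its image have a well-defined type in $\weylch$, the angle between a direction and a Weyl-chamber-type direction in a spherical building link takes values in a \emph{finite} set determined by $(S,W)$, and since $\diam\weylch\le\pi/2$ the ``join with a flat'' degeneracy cannot occur. So the minimal nonzero value of the relevant $|\cos(\cdot)|$, call it $\kappa=\kappa(S,W)>0$, gives $\varphi'(s^+)\ge \kappa$ for $s$ slightly past the $D_a$-boundary along $\sigma$, whence $d_g(x)=\varphi(t_0)\ge a+\kappa\,(t_0-s)$ and, letting $s\to$ the exit time, $d_g(x)+a\ge d_g(x)\ge \kappa\, d(x,D_a)$ after absorbing constants; taking $K=\kappa$ (or a harmless rescaling) yields the claim. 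The final sentence — all isometries are semisimple — follows because if $D_a=\emptyset$ for every $a<\delta_g$ but $D_{\delta_g}\ne\emptyset$ we are done, while if $\delta_g$ were not attained we could take $a_n\downarrow\delta_g$ with $D_{a_n}\ne\emptyset$ nested, and the inequality $d_g(x)+a_n\ge K\,d(x,D_{a_n})$ forces the $D_{a_n}$ to be a bounded (hence, by completeness and convexity, nonempty-in-the-limit) decreasing family, so $\minset_g=\bigcap_n D_{a_n}\ne\emptyset$.

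The main obstacle I anticipate is making the link-level lower bound $\kappa(S,W)>0$ genuinely uniform and correctly handling the point $y$ where $d_g$ attains the value $a$: the direction $\ora{y(gy)}$ need not be a regular direction, $g$ may have nontrivial rotational part, and the first-variation argument has to be organized so that the finitely-many-angles rigidity of spherical building links — together with $\diam\weylch\le\pi/2$ ruling out a spherical join factor — actually forbids the degenerate case where the outgoing direction is simultaneously orthogonal to both $\ora{y(gy)}$ and $dg(\ora{y(gy)})$-type data. Concretely, one must argue that such simultaneous orthogonality would realize a flat half-plane on which $g$ acts trivially transverse to the axis direction, contradicting the hypothesis; packaging this cleanly, and checking it survives passing from the (possibly non-thick) link $\Sigma_yX$ to the model complex, is the delicate part. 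Everything else — convexity of $D_a$, convexity of $d_g\circ\sigma$, the $1$-Lipschitz projection, and the final semisimplicity deduction — is routine $\CAT(0)$ bookkeeping.
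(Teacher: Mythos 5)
There is a genuine gap, and it sits exactly at the step you flag as delicate: your plan is to prove a uniform positive lower bound $\varphi'(s^+)\ge\kappa(S,W)>0$ for the one-sided derivative of $d_g\circ\sigma$ immediately upon exiting $D_a$, which would yield the stronger inequality $d_g(x)\ge a+\kappa\,d(x,D_a)$. That stronger inequality is false. Take $g$ hyperbolic and $a=\delta_g$, so $D_a=\minset_g$. Inside the parallel set $P\cong Y\times\R$ of an axis one has $d_g(y,s)^2=d(y,\bar gy)^2+\delta_g^2$, hence $d_g-\delta_g=O\bigl(d(\cdot,\minset_g)^2\bigr)$ near $\minset_g$ and the one-sided derivative of $d_g\circ\sigma$ at the exit point is $0$. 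Concretely, for $v=\ora{yx}$ pointing out of $\minset_g$ inside $P$ and orthogonal to the axis, both angles in your first-variation formula equal $\pihalf$; this ``simultaneous orthogonality'' produces a flat strip lying in the parallel set of the axis, on which $g$ translates, and this is perfectly compatible with $X$ having no Euclidean factors --- it does not force a Euclidean de Rham factor of $X$. This is precisely why the proposition is stated with the additive slack $+a$ on the left (and why the paper remarks it is only informative for $d(x,D_a)$ large compared to $a$); no purely local derivative estimate at $\partial D_a$ can prove it. A second, independent problem is the finiteness you invoke: the building-theoretic rigidity says that the \emph{distance between two points of prescribed subtypes} takes finitely many values, but your $v=\ora{yx}$ is an arbitrary direction with no prescribed subtype, so the angle $\angle_y(v,\ora{y(gy)})$ ranges over a continuum and there is no ``minimal nonzero value $\kappa$''.

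The paper's proof avoids both issues by arguing by contradiction along a sequence of putative counterexamples $(X_n,g_n,x_n,a_n)$ and working \emph{at infinity} rather than infinitesimally at $\partial D_{a_n}$: the segment from the projection $y_n\in D_{a_n}$ to $x_n$ is extended to a ray $y_n\zeta_n$, the ideal point $\zeta_n$ is replaced by a nearby point $\mu_n$ of a \emph{fixed} subtype (this is where finiteness of distances between points of equal subtype legitimately applies), one shows $\angle_{y_n}(\mu_n,g_n\mu_n)\to0$ and hence $=0$ for large $n$, and then produces a point $z_n\in D_{a_n}$ on the ray $y_n\mu_n$ making angle tending to $0$ with $x_n$ at $y_n$ --- contradicting the $\ge\pihalf$ angle property of the nearest-point projection. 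Your closing deduction of semisimplicity from the inequality is fine, but the core estimate needs to be reorganized along these asymptotic lines rather than as a boundary derivative bound.
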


\begin{proof}
Suppose there is no such constant $K$. Then there exist sequences $X_n$
of Euclidean buildings with associated spherical Coxeter complex $(S,W)$,
isometries $g_n\in Isom(X_n)$, points $x_n\in X_n$ and real numbers $a_n$
such that $\emptyset \neq D_n:=D_{a_n}(g_n)\subset X_n$ and
$$d_{g_n}(x_n)+a_n \leq \frac{1}{n}d(x_n,D_n).$$

Let $y_n$ be the projection of $x_n$ onto $D_n$ and extend the segment
$y_nx_n$ to a ray $y_n\zeta_n$ with $\zeta_n\in\tits X_n$ (see Figure~\ref{fig:parreau}). 
After taking a 
subsequence, we may assume that the subtype of $\zeta_n$ converges to some
$\tau\in\weylch/\Gamma$. 
Since the distance between two points of the same subtype
can take only finitely many values, for $n$ big enough there is a unique point 
$\mu_n \in \tits X_n$ nearest to $\zeta_n$ of subtype $\tau$
and $\dtits(\mu_n,\zeta_n)\rightarrow 0$. 
This implies that $g_n\mu_n$ is also 
the unique point nearest to $g_n\zeta_n$ of subtype $\tau$.

Let $x_n'$ be the point on the ray $y_n(g_n\zeta_n)$ at distance $d(x_n,y_n)$
from $y_n$. Then $d(g_nx_n,x_n')\leq d(g_ny_n,y_n)\leq a_n$ because the rays
$y_n(g_n\zeta_n)$ and $(g_ny_n)(g_n\zeta_n)$ are asymptotic (see Figure~\ref{fig:parreau}). 
Hence
$$
\frac{1}{n}d(x_n,y_n)\geq d(x_n,g_nx_n)+a_n\geq d(x_n,g_nx_n)+d(g_nx_n,x_n')
\geq d(x_n,x_n').
$$
Considering the triangle $(y_n,x_n,x_n')$ 
this implies that $\angle_{y_n}(\zeta_n,g_n\zeta_n)=\angle_{y_n}(x_n,x_n')\rightarrow 0$.
Hence, from the triangle inequality in the link $\Sigma_{y_n}X_n$ we obtain
$$\angle_{y_n}(\mu_n,g_n\mu_n)\leq\angle_{y_n}(\mu_n,\zeta_n) + 
\angle_{y_n}(\zeta_n,g_n\zeta_n)
+\angle_{y_n}(g_n\zeta_n,g_n\mu_n) \rightarrow 0.
$$ 
It follows that for $n$ big enough $\angle_{y_n}(\mu_n,g_n\mu_n)=0$ because
$\mu_n$ and $g_n\mu_n$ have the same subtype.
Therefore the rays $y_n\mu_n$ and $y_n(g_n\mu_n)$ must initially coincide.
This in turn implies that for $z_n$ on the ray $y_n\mu_n$ close enough to $y_n$
(such that $z_n$ also lies on the ray $y_n(g_n\mu_n)$)
holds $d(z_n,g_nz_n)\leq d(y_n,g_ny_n)\leq a_n$ since the rays 
$y_n(g_n\mu_n)$ and $(g_ny_n)(g_n\mu_n)=g_n(y_n\mu_n)$ are asymptotic
(see Figure~\ref{fig:parreau}).
Thus $z_n\in D_n=\{x\in X_n\;|\; d_{g_n}(x)\leq a_n\}$ and since
$y_n$ is the projection of $x_n$ onto $D_n$ we get 
$\angle_{y_n}(x_n,z_n)\geq \pihalf$. 
We obtain a contradiction to
$\angle_{y_n}(x_n,z_n) = \angle_{y_n}(\zeta_n,\mu_n) \rightarrow 0$.
This proves the first part.

\begin{figure}\begin{center}
\includegraphics[scale=0.4]{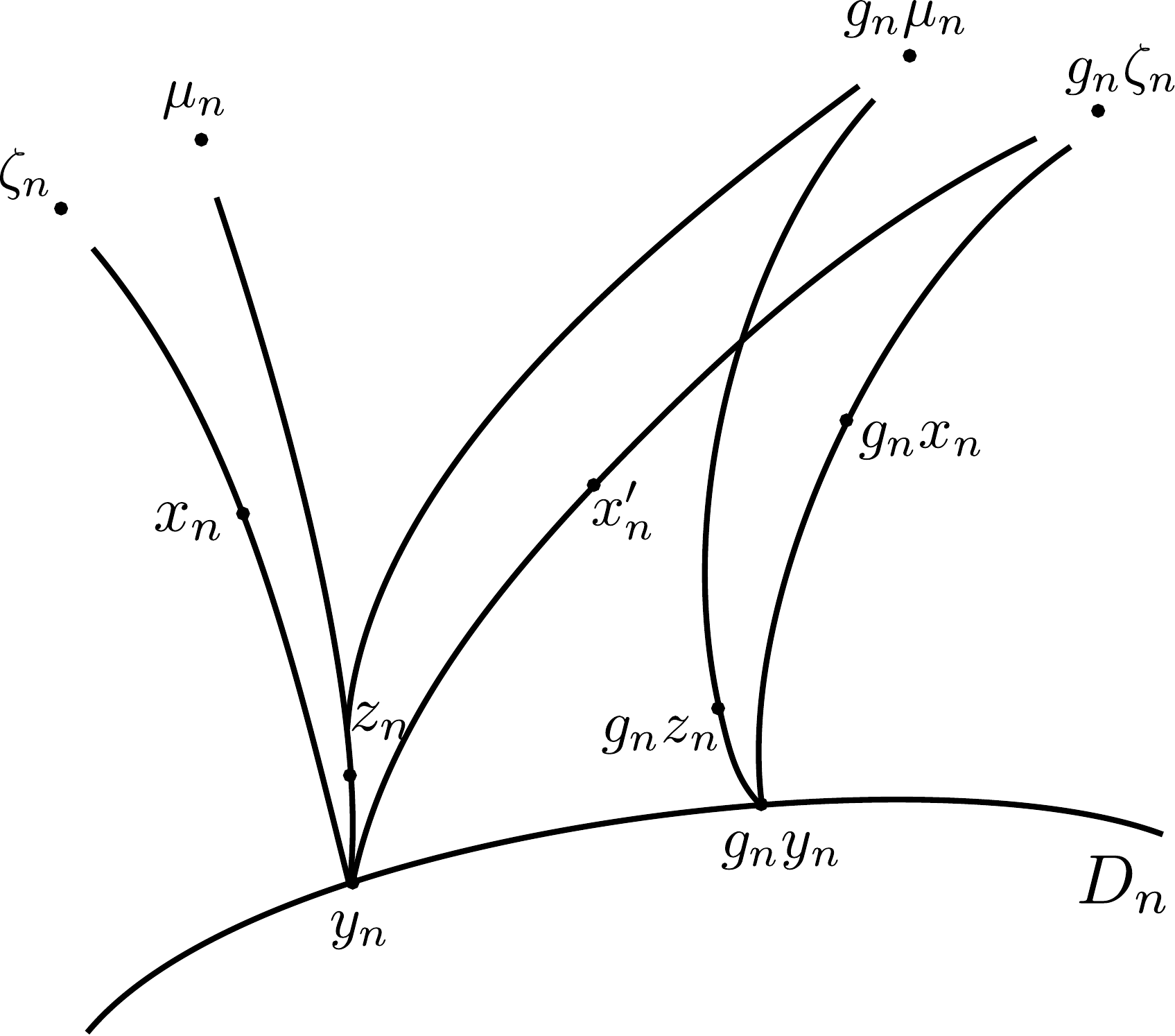}
\caption{Proof of Proposition~\ref{prop:parreau}}
\label{fig:parreau}
\end{center}
\end{figure}

The inequality for $d_{g}$ shows that for a given point $x\in X$ and 
$d_g(x)\geq a>\delta_g$ the distance of $x$ to $D_a$
is bounded independently of $a$. In other words, there is a $r>0$ such
that $B_x(r)\cap D_a\neq \emptyset$ for all $a>\delta_g$.
This implies that 
$B_x(r)\cap\minset_g=B_x(r)\cap(\bigcap_a D_a)$ is nonempty and therefore $g$ is semisimple.
\end{proof}

\begin{remark}
 The lower bound for $d_g$ given by Proposition~\ref{prop:parreau} is
interesting only for big values of $d(x,\minset_g)$:
if $\delta_g>0$ and $d(x,\minset_g)\leq \delta_g/C$ we just get the trivial 
bound $d_g(x)\geq 0$.
\end{remark}

We will also need the following generalization of \cite[Lemma 4.6.3]{KleinerLeeb:quasi-isom}.

\begin{proposition}\label{prop:rayinparalset}
Let $c$ be a geodesic line in the Euclidean building $X$ with 
$c(\infty)=:\xi\in\tits X$
and denote with $P_c$ the parallel set of $c$.
Let $\alpha>0$ be the distance of $\xi$ to the boundary of the union of all chambers
in $\tits X$ containing $\xi$ 
(notice that $\alpha$ depends only on the (sub)type of $\xi$ in $\weylch$). 
Let $\rho$ be a geodesic ray asymptotic to $\xi$. 
Then $\rho(t)\in P_c$ for $t\geq \frac{d(\rho(0),P_c)}{\sin\alpha}$.
\end{proposition}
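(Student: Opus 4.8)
The plan is to study the convex $1$-Lipschitz function $f(t):=d(\rho(t),P_c)$ and to show that it decreases at rate at least $\sin\alpha$ as long as $\rho(t)\notin P_c$. Note first that $\xi\in\tits P_c$ because $c\subset P_c$; hence once $\rho$ meets $P_c$ it stays there, since if $\rho(t_0)\in P_c$ then $\rho|_{[t_0,\infty)}$ is the geodesic ray from $\rho(t_0)$ to $\xi\in\tits P_c$ and therefore lies in the closed convex set $P_c$. So, starting from $f(0)=d(\rho(0),P_c)$, the rate estimate forces $f$ to vanish at some time $t^{\ast}\le d(\rho(0),P_c)/\sin\alpha$, after which $\rho(t)\in P_c$; this is exactly the assertion.

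For the rate estimate, fix $t$ with $z:=\rho(t)\notin P_c$ and let $q:=p_{P_c}(z)$, so $f(t)=d(z,q)>0$. Since $f(t+h)\le d(\rho(t+h),q)$ with equality at $h=0$, the first-variation inequality in $\CAT(0)$ spaces gives $f'_+(t)\le-\cos\angle_z(q,\xi)$, where $\angle_z(q,\xi)$ is the angle at $z$ between $[z,q]$ and the ray $\rho|_{[t,\infty)}$ to $\xi$. As $\alpha\le\pihalf$ (chambers have diameter $\le\pihalf$), it thus suffices to show $\angle_z(q,\xi)\le\pihalf-\alpha$. The two finite angles of the ideal triangle with vertices $z$, $q$ and $\xi$ sum to at most $\pi$ (a standard consequence of the $\CAT(0)$ inequality), so it is enough to prove $\angle_q(z,\xi)\ge\pihalf+\alpha$.

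I would establish this in the spherical building $\Sigma_q X$. Since $q\in P_c$, there is an axis $c'$ of the translation along $c$ passing through $q$; let $\xi_q,\bar\xi_q\in\Sigma_q X$ be its two directions at $q$, pointing to $\xi$ and to $\bar\xi:=c(-\infty)$, so that $\angle_q(\xi_q,\bar\xi_q)=\pi$ and, by the standard description of the link of a parallel set, $\Sigma_q P_c=\{v\in\Sigma_q X:\angle_q(v,\xi_q)+\angle_q(v,\bar\xi_q)=\pi\}$. The key point is the inclusion $\overline{B}_{\Sigma_q X}(\xi_q,\alpha)\subset\Sigma_q P_c$. To prove it, let $v\in\Sigma_q X$ with $\angle_q(v,\xi_q)\le\alpha$. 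Then $v$ lies in the star of $\xi_q$ in $\Sigma_q X$, because $\alpha$ is the distance from a point of type $\theta(\xi)=\theta(\xi_q)$ to the boundary of its star in any spherical building modelled on the spherical Coxeter complex associated to $X$ (the discreteness phenomenon mentioned in the introduction); so there is a chamber $\Delta$ of $\Sigma_q X$ with $v,\xi_q\in\Delta$. Choosing an apartment $A$ of $\Sigma_q X$ that contains $\Delta$ and some chamber containing $\bar\xi_q$, we have, inside $A\cong S$, that $\bar\xi_q$ is the antipode of $\xi_q$ (as $\angle_q(\xi_q,\bar\xi_q)=\pi=\diam S$), hence $v$ lies on a geodesic of $A$ from $\xi_q$ to $\bar\xi_q$, so $\angle_q(v,\xi_q)+\angle_q(v,\bar\xi_q)=\pi$, i.e.\ $v\in\Sigma_q P_c$. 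This proves the inclusion. Now let $u\in\Sigma_q X$ be the direction of $[q,z]$ at $q$; the nearest-point projection property gives $\angle_q(u,w)\ge\pihalf$ for every $w\in\Sigma_q P_c$, in particular $\angle_q(u,\xi_q)\ge\pihalf\ge\alpha$. Applying it to the point $w$ on the geodesic $[u,\xi_q]$ of $\Sigma_q X$ at distance $\alpha$ from $\xi_q$ — which therefore exists, and lies in $\overline{B}_{\Sigma_q X}(\xi_q,\alpha)\subset\Sigma_q P_c$ — we obtain $\pihalf\le\angle_q(u,w)=\angle_q(u,\xi_q)-\alpha$, that is $\angle_q(z,\xi)=\angle_q(u,\xi_q)\ge\pihalf+\alpha$, as required.

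I expect the main obstacle to be the building-theoretic input behind the inclusion $\overline{B}_{\Sigma_q X}(\xi_q,\alpha)\subset\Sigma_q P_c$: the description $\Sigma_q P_c=\{v:\angle_q(v,\xi_q)+\angle_q(v,\bar\xi_q)=\pi\}$ of the link of a parallel set, and the fact that the distance $\alpha$ from $\xi$ to the boundary of its star depends only on the type of $\xi$ and on the spherical Coxeter complex associated to $X$ (so that the same number governs the local geometry at $\xi_q$ in $\Sigma_q X$). The $\CAT(0)$ ingredients — convexity of $P_c$, the first-variation inequality, the ideal-triangle angle sum, and the building axioms used to produce the apartment $A$ — are routine.
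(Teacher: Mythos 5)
Your argument is correct in outline, but it reaches the conclusion by a genuinely different route than the paper. The paper first proves the qualitative statement that $\rho$ eventually enters $P_c$ (via an apartment at infinity containing $c(-\infty)$ and the star of $\xi$, together with \cite[Lemma 4.6.3]{KleinerLeeb:quasi-isom}), and then runs a single comparison-triangle estimate on $(\rho(0),\rho(t_0),z)$, where $t_0$ is the entry time and $z$ is the projection of $\rho(0)$: the right angle sits at $z$, and the angle $\geq\alpha$ is obtained at the \emph{entry point} $\rho(t_0)$, using that the backward direction $\ora{\rho(t_0)\rho(0)}$ is antipodal to $\ora{\rho(t_0)\xi}$, hence of the same subtype as $\xi$, and lies at distance $0$ or $\geq\alpha$ from the union of chambers $\Sigma_{\rho(t_0)}P_c$. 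You instead derive the differential inequality $f'_+\leq-\sin\alpha$ for $f(t)=d(\rho(t),P_c)$, placing the angle estimate at the \emph{foot point} $q=p_{P_c}(\rho(t))$ and transporting it to $\rho(t)$ via the ideal-triangle angle sum and the first-variation formula. This buys you the existence of the entry time and the quantitative bound in one stroke, with no appeal to \cite[Lemma 4.6.3]{KleinerLeeb:quasi-isom}; the price is the extra $\CAT(0)$ machinery and, more significantly, the identification $\Sigma_q P_c=\{v:\angle_q(v,\xi_q)+\angle_q(v,\bar\xi_q)=\pi\}$ (you only need the inclusion $\supseteq$, restricted to the star of $\xi_q$). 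That inclusion is true and is part of the standard theory of parallel sets in Euclidean buildings, but it is not a formal consequence of anything you prove on the spot and is exactly as deep as the lemma the paper cites; it must be referenced or proved (it amounts to exponentiating an apartment of $\Sigma_qX$ containing $\xi_q$ and $\bar\xi_q$ to a maximal flat through $q$ asymptotic to both ends of $c$). Everything else — the quantization of the distance to the boundary of the star depending only on the type, the projection angle $\geq\pihalf$, the inequality $\alpha\leq\pihalf$ — matches the inputs the paper itself uses.
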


\begin{proof}
Let $A\subset X$ be an apartment containing the ray $\rho$. The convex hull
in $\tits X$ of $c(-\infty)\in\tits X$ and all chambers in $\tits A$ containing $\xi$ is an
apartment contained in $\tits P_c$, which is the boundary at infinity of an apartment
$A'\subset P_c$. It follows that $\tits A \cap \tits A'$ is a union of chambers containing
$\xi$ in its interior. 
\cite[Lemma 4.6.3]{KleinerLeeb:quasi-isom} applied to the regular geodesic lines in
$A$ asymptotic to points in $\tits A \cap \tits A'$ implies that
$A\cap A'$ is a convex subset with boundary $\tits(A\cap A') = \tits A \cap \tits A'$
(cf.\ \cite[Lemma 4.6.5]{KleinerLeeb:quasi-isom}).
In particular, $A\cap A'$  contains a subray of $\rho$.
That is, $\rho(t)\in P_c$ for $t$ big enough.

Let $t_0$ be such that $\rho(t_0)$ is the first point of $\rho$ in $P_c$. We want to show
that $t_0\leq \frac{d(\rho(0),P_c)}{\sin\alpha}$.
Let $x=\rho(0)$, $y=\rho(t_0)$ and let $z$ be the projection of $x$ onto $P_c$.
Then $\angle_z(x,y)\geq \pihalf$.
Since $y$ is the point where $\rho$ enters $P_c$ we see that 
$\angle(\ora{yx},\Sigma_y P_c)>0$. 
In particular, the chambers in $\Sigma_y X$ containing $\ora{yx}$ are not
contained in $\Sigma_y P_c$. 
It follows that $\angle(\ora{yx},\Sigma_y P_c)\geq \alpha$
because $\ora{yx}$ is antipodal to $\ora{y\xi}$ and therefore 
both have the same subtype as $\xi$.
This in turn implies that $\angle_y(x,z)\geq \alpha$.
By considering a comparison triangle for $(x,y,z)$ we obtain 
$d(x,y)\sin\alpha \leq d(x,z)$, that is, $t_0\sin\alpha\leq d(\rho(0),P_c)$.
\end{proof}

\begin{remark}
The non-existence of parabolic isometries of Euclidean buildings can also be deduced
from Proposition~\ref{prop:rayinparalset}
as follows. By \cite[Corollary 1.5]{LytchakCaprace:atinfinity} an isometry $g$
has a fixed point in $X\cup\tits X$. If $g$ is not elliptic, it must fix
a point $\xi\in \tits X$ at infinity.
Let $c$ be a geodesic line asymptotic to $\xi$. 
The parallel set of $c$ splits as a product
$P_c\cong Y\times \R$ where $Y\subset X$ 
is again a Euclidean building with $\rank(Y)=\rank(X)-1$.
The isometry $g$ induces an isometry $\bar g $ of $Y$ as follows:
for $y\in Y$ consider the ray $g(\bar y\xi)$
where $\bar y:=(y, 0)$. Since $\xi$ is fixed by $g$, this ray is again 
asymptotic to $\xi$. Therefore by Proposition~\ref{prop:rayinparalset} it
eventually coincides with a line parallel to $c$ of the form $\{y'\}\times\R$. 
The map $y\mapsto y'=:\bar g(y)$ is an isometry of $Y$.
By induction on the rank of the building $\bar g$ must be a semisimple isometry.
If $\bar g$ is elliptic and $y\in Y$ is a fixed point, then the rays $\bar y\xi$
and $g(\bar y\xi)$ must eventually coincide. It follows that there is a geodesic
line which eventually coincides with $\bar y\xi$ that is translated by $g$,
thus this line is an axis of $g$ and 
we conclude that $g$ is semisimple. 
If $\bar g$ is hyperbolic and
$l\subset \minset_{\bar g}\subset Y$ is an axis of $\bar g$, then
the rays $(g\bar y)\xi$ for $y\in l$ eventually
enter the two dimensional flat $F=l\times\R$ because $l$ is preserved by $\bar g$. 
It follows that for $x=(y,t)\in F$ with $t$ big enough $x,gx,g^2x\in F\cap gF$
and these three points lie on a segment because $y,\bar gy,\bar g^2y\in l$.
This implies that there is a geodesic line (not asymptotic to $\xi$) 
through $x,gx,g^2x$  
that is translated by $g$ and again $g$ is semisimple.
\end{remark}

\section{Proof of the main result}

If $g$ is elliptic, then the result follows from Proposition~\ref{prop:parreau}.

Let $g$ be a hyperbolic isometry with axis asymptotic to $\xi\in\tits X$.
That is, if $c$ is an axis of $g$, then $g(c(t)) = c(t+\delta_g)$ and $c(\infty)=\xi$.
Let $P\cong Y\times\R$ be the parallel set 
 of an axis of $g$. 
The metric space $Y$ is again an Euclidean building and we denote again with $d$
its induced distance. 
Its Tits boundary $\tits Y$ can be canonically identified with $\Sigma_\xi \tits X$.
It follows that the spherical Coxeter complex associated to $Y$
depends only on $(S,W)$ and the type of the face $\sigma\subset \tits X$ 
containing $\xi$ in its interior.
$Y$ splits off a Euclidean factor of dimension $\dim(\sigma)$.
The isometry $g$ restricts to an isometry of $P$ and
$\minset_g\cong Y'\times\R$ with $Y'\subset Y$.
Thus, $g$ induces an elliptic isometry $\bar g$ of $Y$ with fixed point set $Y'$.
The projection of $\bar g$ to the Euclidean factor of $Y$ induces an isometry
of the face $\sigma$. Therefore there are only finitely many possibilities for
the linear part of the projection of $\bar g$ to the Euclidean factor
and they depend only on the type of $\sigma$.
Since there are also only finitely many types of faces of $\tits X$, 
Proposition~\ref{prop:parreau} gives us a constant $C'>0$ 
depending only on $(S,W)$  such that 
$d(y,\bar gy)\geq C'd(y,Y')$ for all $y\in Y$.

Suppose first that $x\in P\cong Y\times\R$ (cf.\ \cite[Prop. 4.4]{Rousseau:eximmob}).
Then $x=(y,t)\in Y\times\R$ and $gx=(\bar gy,t+\delta_g)$. It follows that
$$d(x,gx)^2= d(y,\bar gy)^2+\delta_g^2 \geq C'^2d(y,Y')^2+\delta_g^2=
C'^2d(x,\minset_g)^2+\delta_g^2.$$
This proves the assertion in this case. We want now to generalize this idea.

Let $b_\xi(x) = \lim\limits_{t\rightarrow\infty} (d(x,c(t))-t)$ 
be a Busemann function centered at $\xi$,
where $c$ is an axis of $g$ with $c(\infty) = \xi$. 
For $x\in X$ holds 
$b_\xi(gx) = \lim\limits_{t\rightarrow\infty} (d(gx,c(t))-t)=  
\lim\limits_{t\rightarrow\infty} (d(gx,c(t+\delta_g))-(t+\delta_g))=
\lim\limits_{t\rightarrow\infty} (d(gx,gc(t))-t) - \delta_g =
\lim\limits_{t\rightarrow\infty} (d(x,c(t))-t) - \delta_g = b_\xi(x) - \delta_g$.
It follows that $g$ maps
the horosphere $Hs(r):=b_{\xi}^{-1}(r)$ (the horoball $Hb(r):=b_{\xi}^{-1}((-\infty,r])$)
to the horosphere $Hs(r-\delta_g)$ 
(the horoball $Hb(r-\delta_g)$).

Consider a point $x\in X$. Let $x'$ be the point 
on the ray $x\xi$ at distance $\delta_g$ from $x$. Then $x'$ and $gx$ are
in the same horosphere $Hs(b_\xi(gx))$ and $x'$ is the projection of $x$
onto the horoball $Hb(b_\xi(gx))$. It follows that
$\angle_{x'}(x,gx)\geq \pihalf$ and from triangle comparison 
with respect to the triangle $(x,x',gx)$ we conclude
$$d(x,gx)^2\geq d(x,x')^2 + d(x',gx)^2 = \delta_g^2+d(x',gx)^2.$$
Hence, it suffices to show that there is a constant $C>0$
depending only on the spherical
Coxeter complex associated to $X$ and the type of $\xi$ such that
$d(x',gx) \geq Cd(x,\minset_g)$.
In the case $x=(y,t)\in P\cong Y\times\R$ above, we have $x'=(y,t+\delta_g)$. Hence,
$d(x',gx) = d(y,\bar g y) \geq C'd(y,Y') = C'd(x,\minset_g)$.

Suppose there is no such constant $C>0$.
Then there exist sequences $X_n$
of Euclidean buildings with associated spherical Coxeter complex $(S,W)$,
hyperbolic isometries $g_n\in Isom(X_n)$ with axes asymptotic to a
point $\xi_n\in\tits X_n$ of constant
subtype $\tau\in\weylch/\Gamma$, and points $x_n\in X_n$ 
such that
$$d(x_n',g_nx_n) \leq \frac{1}{n}d(x_n,M_n),$$
where $M_n:=\minset_{g_n}$ and
$x_n'$ is the projection of $x_n$ onto
the horoball $Hb(b_{\xi_n}(g_nx_n))$ centered at $\xi_n$.

After rescaling the buildings $X_n$ we may assume that $\delta_{g_n}=\delta$
is independent of $n$. From Proposition~\ref{prop:parreau} we get a constant 
$K>0$ depending only on $(S,W)$ such that
$$Kd(x_n,M_n)-\delta \leq d(x_n,g_nx_n)\leq d(x_n,x_n')+d(x_n',g_nx_n)\leq
\delta + \frac{1}{n}d(x_n,M_n).$$
This implies that $d(x_n,M_n)\leq\frac{2\delta}{K-1/n}\leq \frac{3\delta}{K}$
for $n$ big enough.

Let $P_n\subset X_n$ the parallel set of an axis of $g_n$.
Proposition~\ref{prop:rayinparalset} implies that there is an $\alpha$ independent
of $n$ such that a point on the ray $x_n\xi_n$
at distance $\geq \frac{d(x_n,P_n)}{\sin\alpha}$ from $x_n$ lies in $P_n$.
For $k\in\N$, let $x_n^k$ be the point on the ray $x_n\xi_n$
at distance $k\delta$ from $x_n$ (see Figure~\ref{fig:mainthm}). 
Choose a fixed $m\in \N$ such that $m\geq \frac{3}{K\sin\alpha}$. 
Then $x_n^m$ lies in $P_n$ for all $n$ big enough because 
$m\delta\geq \frac{3\delta}{K\sin\alpha} \geq 
\frac{d(x_n,M_n)}{\sin\alpha}\geq \frac{d(x_n,P_n)}{\sin\alpha}$.

We show now inductively on $k$ that $d(g_n^k x_n,x_n^k)\leq \frac{k}{n}d(x_n,M_n)$.
For $k=0$ this is trivial. Let us prove it for $k+1$.
First observe that the rays $x_n^k\xi_n$ and $(g_n^kx_n)\xi_n=g_n^k(x_n\xi_n)$ are
asymptotic and therefore 
$d(x_n^{k+1},g_n^kx_n')\leq d(x_n^k,g_n^kx_n)$ (see Figure~\ref{fig:mainthm}). Hence,
\begin{align*}
d(g_n^{k+1} x_n,x_n^{k+1})&\leq d(g_n^{k+1} x_n, g_n^k x_n') +d(g_n^k x_n',x_n^{k+1})\\
&\leq d(g_n x_n, x_n') + d(x_n^k,g_n^kx_n)\\ 
&\leq \frac{1}{n}d(x_n,M_n) +\frac{k}{n}d(x_n,M_n) = \frac{k+1}{n}d(x_n,M_n).
\end{align*}

\begin{figure}\begin{center}
\includegraphics[scale=0.8]{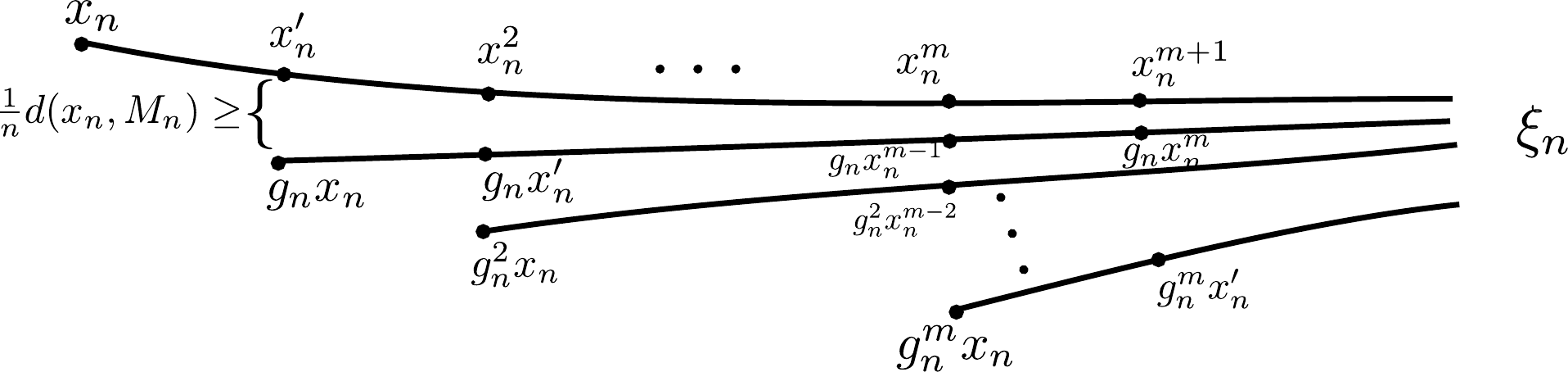}
\caption{Proof of the main result}
\label{fig:mainthm}
\end{center}
\end{figure}

In particular, we have that $d(x_n^m,g_n^m x_n)\leq\frac{m}{n}d(x_n,M_n)$ and this implies 
$$d(x_n^m,M_n)\geq d(g_n^m x_n,M_n) - d(x_n^m,g_n^m x_n)  \geq
d(x_n,M_n) - \frac{m}{n}d(x_n,M_n)=\left(1-\frac{m}{n}\right)d(x_n,M_n).$$

Since $x_n^m$ lies in the parallel set $P_n$ of an axis of $g_n$, we already know
in this case that there is a constant $C'>0$ such that
$d(x_n^{m+1},g_nx_n^m) \geq C'd(x_n^m,M_n) \geq C'\left(1-\frac{m}{n}\right)d(x_n,M_n)$.

On the other hand, since the rays $x_n'\xi_n$ and $(g_nx_n)\xi_n = g_n(x_n\xi_n)$ 
are asymptotic, we see that $d(x_n',g_nx_n)\geq d(x_n^{m+1},g_nx_n^m)$. Thus, we obtain
$$
\frac{1}{n}d(x_n,M_n)\geq 
d(x_n',g_nx_n)\geq d(x_n^{m+1},g_nx_n^m) \geq C'\left(1-\frac{m}{n}\right)d(x_n,M_n).
$$
This implies $1\geq C'(n-m)$, which is a contradiction for $n$ big enough.
\qed

\bigskip
{\em Acknowledgments.} 
The results on this paper were obtained during a stay at the Max Planck Institute
for Mathematics in Bonn in 2011. We are grateful to the MPI
for financial support and for its great hospitality.

\bibliography{MyBibliography}
\bibliographystyle{amsalpha}

\noindent {\small
\textsc{Mathematisches Institut, Universit\"at M\"unchen, Theresienstr. 39, 
D-80333 M\"unchen, Germany}\\
{\em E-mail:} {\texttt cramos@mathematik.uni-muenchen.de}}

\end{document}